\definecolor{hot}{RGB}{65,105,225} 
\pgfplotsset{compat=1.18}
\definecolor{maccolor}{rgb}{0.3,0.3,0.8}
\lstdefinelanguage{Macaulay2}
{
basicstyle={\ttfamily},
keywordstyle={\color{maccolor!80!black}},
commentstyle={\color{gray}},
stringstyle={\color{red!40!black}},
rulecolor=\color{maccolor},
basewidth={1.2ex}, 
sensitive=false,
morecomment=[l]{--},
morecomment=[s]{-*}{*-},
morestring=[b]",
escapechar={`},
escapebegin={\rmfamily},
morekeywords={about,abs,AbstractToricVarieties,accumulate,Acknowledgement,acos,acosh,acot,addCancelTask,addDependencyTask,addEndFunction,addHook,AdditionalPaths,addStartFunction,addStartTask,Adjacent,adjoint,AdjointIdeal,AffineVariety,AfterEval,AfterNoPrint,AfterPrint,agm,AInfinity,alarm,AlgebraicSplines,Algorithm,Alignment,all,AllCodimensions,allowableThreads,ambient,analyticSpread,Analyzer,AnalyzeSheafOnP1,ancestor,ancestors,ANCHOR,and,andP,AngleBarList,ann,annihilator,antipode,any,append,applicationDirectory,applicationDirectorySuffix,apply,applyKeys,applyPairs,applyTable,applyValues,apropos,argument,Array,arXiv,Ascending,ascii,asin,asinh,ass,assert,associatedGradedRing,associatedPrimes,AssociativeAlgebras,AssociativeExpression,atan,atan2,atEndOfFile,Authors,autoload,AuxiliaryFiles,backtrace,Bag,Bareiss,baseFilename,BaseFunction,baseName,baseRing,baseRings,BaseRow,BasicList,basis,BasisElementLimit,Bayer,BeforePrint,beginDocumentation,BeginningMacaulay2,Benchmark,benchmark,Bertini,BesselJ,BesselY,betti,BettiCharacters,BettiTally,between,BGG,BIBasis,Binary,BinaryOperation,Binomial,binomial,BinomialEdgeIdeals,Binomials,BKZ,BlockMatrix,BLOCKQUOTE,BODY,Body,BoijSoederberg,BOLD,Book3264Examples,Boolean,BooleanGB,borel,Boxes,BR,break,Browse,Bruns,cache,CacheExampleOutput,CacheFunction,CacheTable,cacheValue,CallLimit,cancelTask,capture,catch,Caveat,CC,CDATA,ceiling,Center,centerString,Certification,ChainComplex,chainComplex,ChainComplexExtras,ChainComplexMap,ChainComplexOperations,ChangeMatrix,char,CharacteristicClasses,characters,charAnalyzer,check,CheckDocumentation,chi,Chordal,class,Classic,clean,clearAll,clearEcho,clearOutput,close,closeIn,closeOut,ClosestFit,CODE,code,codim,CodimensionLimit,coefficient,CoefficientRing,coefficientRing,coefficients,Cofactor,CohenEngine,CohenTopLevel,CoherentSheaf,CohomCalg,cohomology,coimage,CoincidentRootLoci,coker,cokernel,collectGarbage,columnAdd,columnate,columnMult,columnPermute,columnRankProfile,columnSwap,combine,Command,commandInterpreter,commandLine,COMMENT,commonest,commonRing,comodule,CompactMatrix,compactMatrixForm,CompiledFunction,CompiledFunctionBody,CompiledFunctionClosure,Complement,complement,complete,CompleteIntersection,CompleteIntersectionResolutions,Complexes,ComplexField,components,compose,compositions,compress,concatenate,conductor,ConductorElement,cone,Configuration,ConformalBlocks,conjugate,connectionCount,Consequences,Constant,Constants,constParser,content,continue,contract,Contributors,ConvexInterface,conwayPolynomial,ConwayPolynomials,copy,copyDirectory,copyFile,copyright,Core,CorrespondenceScrolls,cos,cosh,cot,CotangentSchubert,cotangentSheaf,coth,cover,coverMap,cpuTime,createTask,Cremona,csc,csch,current,currentColumnNumber,currentDirectory,currentFileDirectory,currentFileName,currentLayout,currentLineNumber,currentPackage,currentString,currentTime,Cyclotomic,Database,Date,DD,dd,deadParser,debug,debugError,DebuggingMode,debuggingMode,debugLevel,DecomposableSparseSystems,Decompose,decompose,deepSplice,Default,default,defaultPrecision,Degree,degree,degreeLength,DegreeLift,DegreeLimit,DegreeMap,DegreeOrder,DegreeRank,Degrees,degrees,degreesMonoid,degreesRing,delete,demark,denominator,Dense,Density,Depth,depth,Descending,Descent,Describe,describe,Description,det,determinant,DeterminantalRepresentations,DGAlgebras,diagonalMatrix,diameter,Dictionary,dictionary,dictionaryPath,diff,DiffAlg,difference,dim,directSum,disassemble,discriminant,dismiss,Dispatch,distinguished,DIV,Divide,divideByVariable,DivideConquer,DividedPowers,Divisor,DL,Dmodules,do,doc,docExample,docTemplate,document,DocumentTag,Down,drop,DT,dual,eagonNorthcott,EagonResolution,echoOff,echoOn,EdgeIdeals,edit,EigenSolver,eigenvalues,eigenvectors,eint,EisenbudHunekeVasconcelos,elapsedTime,elapsedTiming,elements,Eliminate,eliminate,Elimination,EliminationMatrices,EllipticCurves,EllipticIntegrals,else,EM,Email,End,end,endl,endPackage,Engine,engineDebugLevel,EngineRing,EngineTests,entries,EnumerationCurves,environment,Equation,EquivariantGB,erase,erf,erfc,error,errorDepth,euler,EulerConstant,eulers,even,EXAMPLE,ExampleFiles,ExampleItem,examples,ExampleSystems,Exclude,exec,exit,exp,expectedReesIdeal,expm1,exponents,export,exportFrom,exportMutable,Expression,expression,Ext,extend,ExteriorIdeals,ExteriorModules,exteriorPower,Factor,factor,false,Fano,FastMinors,FastNonminimal,FGLM,File,fileDictionaries,fileExecutable,fileExists,fileExitHooks,fileLength,fileMode,FileName,FilePosition,fileReadable,fileTime,fileWritable,fillMatrix,findFiles,findHeft,FindOne,findProgram,findSynonyms,FiniteFittingIdeals,First,first,firstkey,FirstPackage,fittingIdeal,flagLookup,FlatMonoid,flatten,flattenRing,Flexible,flip,floor,flush,fold,FollowLinks,for,forceGB,fork,FormalGroupLaws,Format,format,formation,FourierMotzkin,FourTiTwo,fpLLL,frac,fraction,FractionField,frames,FrobeniusThresholds,from,fromDividedPowers,fromDual,Function,FunctionApplication,FunctionBody,functionBody,FunctionClosure,FunctionFieldDesingularization,fusePairs,futureParser,GaloisField,gb,GBDegrees,gbRemove,gbSnapshot,gbTrace,gcd,gcdCoefficients,gcdLLL,GCstats,genera,GeneralOrderedMonoid,GenerateAssertions,generateAssertions,generator,generators,Generic,GenericInitialIdeal,genericMatrix,genericSkewMatrix,genericSymmetricMatrix,gens,genus,get,getc,getChangeMatrix,getenv,getGlobalSymbol,getNetFile,getNonUnit,getPrimeWithRootOfUnity,getSymbol,getWWW,GF,gfanInterface,Givens,GKMVarieties,GLex,Global,global,globalAssign,globalAssignFunction,GlobalAssignHook,globalAssignment,globalAssignmentHooks,GlobalDictionary,GlobalHookStore,globalReleaseFunction,GlobalReleaseHook,Gorenstein,GradedLieAlgebras,GradedModule,gradedModule,GradedModuleMap,gradedModuleMap,gramm,GraphicalModels,GraphicalModelsMLE,Graphics,graphIdeal,graphRing,Graphs,Grassmannian,GRevLex,GroebnerBasis,groebnerBasis,GroebnerBasisOptions,GroebnerStrata,GroebnerWalk,groupID,GroupLex,GroupRevLex,GTZ,Hadamard,handleInterrupts,HardDegreeLimit,hash,HashTable,hashTable,HEAD,HEADER1,HEADER2,HEADER3,HEADER4,HEADER5,HEADER6,HeaderType,Heading,Headline,Heft,heft,Height,height,help,Hermite,hermite,Hermitian,HH,hh,HigherCIOperators,HighestWeights,Hilbert,hilbertFunction,hilbertPolynomial,hilbertSeries,HodgeIntegrals,hold,Holder,Hom,homeDirectory,HomePage,Homogeneous,Homogeneous2,homogenize,homology,homomorphism,HomotopyLieAlgebra,hooks,horizontalJoin,HorizontalSpace,HR,HREF,HTML,html,httpHeaders,Hybrid,HyperplaneArrangements,Hypertext,hypertext,HypertextContainer,HypertextParagraph,icFracP,icFractions,icMap,icPIdeal,id,Ideal,ideal,idealizer,identity,if,IgnoreExampleErrors,ii,image,imaginaryPart,IMG,ImmutableType,importFrom,in,incomparable,Increment,independentSets,indeterminate,IndeterminateNumber,Index,index,indexComponents,IndexedVariable,IndexedVariableTable,indices,inducedMap,inducesWellDefinedMap,InexactField,InexactFieldFamily,InexactNumber,InfiniteNumber,infinity,info,InfoDirSection,infoHelp,Inhomogeneous,input,Inputs,insert,installAssignmentMethod,installedPackages,installHilbertFunction,installMethod,installMinprimes,installPackage,InstallPrefix,instance,instances,IntegralClosure,integralClosure,integrate,IntermediateMarkUpType,interpreterDepth,intersect,intersectInP,Intersection,intersection,interval,InvariantRing,inverse,InverseMethod,inversePermutation,Inverses,inverseSystem,InverseSystems,Invertible,InvolutiveBases,irreducibleCharacteristicSeries,irreducibleDecomposition,isAffineRing,isANumber,isBorel,isCanceled,isCommutative,isConstant,isDirectory,isDirectSum,isEmpty,isField,isFinite,isFinitePrimeField,isFreeModule,isGlobalSymbol,isHomogeneous,isIdeal,isInfinite,isInjective,isInputFile,isIsomorphism,isLinearType,isListener,isLLL,isMember,isModule,isMonomialIdeal,isNormal,isOpen,isOutputFile,isPolynomialRing,isPrimary,isPrime,isPrimitive,isPseudoprime,isQuotientModule,isQuotientOf,isQuotientRing,isReady,isReal,isReduction,isRegularFile,isRing,isSkewCommutative,isSorted,isSquareFree,isStandardGradedPolynomialRing,isSubmodule,isSubquotient,isSubset,isSupportedInZeroLocus,isSurjective,isTable,isUnit,isWellDefined,isWeylAlgebra,ITALIC,Iterate,Jacobian,jacobian,jacobianDual,Jets,Join,join,Jupyter,K3Carpets,K3Surfaces,Keep,KeepFiles,KeepZeroes,ker,kernel,kernelLLL,kernelOfLocalization,Key,keys,Keyword,Keywords,kill,koszul,Kronecker,KustinMiller,LABEL,last,lastMatch,LATER,LatticePolytopes,Layout,lcm,leadCoefficient,leadComponent,leadMonomial,leadTerm,Left,left,length,LengthLimit,letterParser,Lex,LexIdeals,LI,Licenses,LieTypes,lift,liftable,Limit,limitFiles,limitProcesses,Linear,LinearAlgebra,LinearTruncations,lineNumber,lines,LINK,linkFile,List,list,listForm,listLocalSymbols,listSymbols,listUserSymbols,LITERAL,LLL,LLLBases,lngamma,load,loadDepth,LoadDocumentation,loadedFiles,loadedPackages,loadPackage,Local,local,localDictionaries,LocalDictionary,localize,LocalRings,locate,log,log1p,LongPolynomial,lookup,lookupCount,LowerBound,LUdecomposition,M0nbar,M2CODE,Macaulay2Doc,makeDirectory,MakeDocumentation,makeDocumentTag,MakeHTML,MakeInfo,MakeLinks,makePackageIndex,MakePDF,makeS2,Manipulator,map,MapExpression,MapleInterface,markedGB,Markov,MarkUpType,match,mathML,Matrix,matrix,MatrixExpression,Matroids,max,maxAllowableThreads,maxExponent,MaximalRank,maxPosition,MaxReductionCount,MCMApproximations,member,memoize,memoizeClear,memoizeValues,MENU,merge,mergePairs,META,method,MethodFunction,MethodFunctionBinary,MethodFunctionSingle,MethodFunctionWithOptions,methodOptions,methods,midpoint,min,minExponent,mingens,mingle,minimalBetti,MinimalGenerators,MinimalMatrix,minimalPresentation,minimalPresentationMap,minimalPresentationMapInv,MinimalPrimes,minimalPrimes,minimalReduction,Minimize,minimizeFilename,MinimumVersion,minors,minPosition,minPres,minprimes,Minus,minus,Miura,MixedMultiplicity,mkdir,mod,Module,module,ModuleDeformations,modulo,MonodromySolver,Monoid,monoid,MonoidElement,Monomial,MonomialAlgebras,monomialCurveIdeal,MonomialIdeal,monomialIdeal,MonomialIntegerPrograms,MonomialOrbits,MonomialOrder,Monomials,monomials,MonomialSize,monomialSubideal,moveFile,multidegree,multidoc,multigraded,MultigradedBettiTally,MultiGradedRationalMap,multiplicity,MultiplicitySequence,MultiplierIdeals,MultiplierIdealsDim2,MultiprojectiveVarieties,mutable,MutableHashTable,mutableIdentity,MutableList,MutableMatrix,mutableMatrix,NAGtypes,Name,nanosleep,Nauty,NautyGraphs,NCAlgebra,NCLex,needs,needsPackage,Net,net,NetFile,netList,new,newClass,newCoordinateSystem,NewFromMethod,newline,NewMethod,newNetFile,NewOfFromMethod,NewOfMethod,newPackage,newRing,nextkey,nextPrime,nil,NNParser,NoetherianOperators,NoetherNormalization,NonminimalComplexes,nonspaceAnalyzer,NoPrint,norm,normalCone,Normaliz,NormalToricVarieties,not,Nothing,notify,notImplemented,NTL,null,nullaryMethods,nullhomotopy,nullParser,nullSpace,Number,number,NumberedVerticalList,numcols,numColumns,numerator,numeric,NumericalAlgebraicGeometry,NumericalCertification,NumericalImplicitization,NumericalLinearAlgebra,NumericalSchubertCalculus,numericInterval,NumericSolutions,numgens,numRows,numrows,odd,oeis,of,ofClass,OL,OldPolyhedra,OldToricVectorBundles,on,OneExpression,OnlineLookup,OO,oo,ooo,oooo,openDatabase,openDatabaseOut,openFiles,openIn,openInOut,openListener,OpenMath,openOut,openOutAppend,operatorAttributes,Option,OptionalComponentsPresent,optionalSignParser,Options,options,OptionTable,optP,or,Order,order,OrderedMonoid,orP,OutputDictionary,Outputs,override,pack,Package,package,PackageCitations,PackageDictionary,PackageExports,PackageImports,PackageTemplate,packageTemplate,pad,pager,PairLimit,pairs,PairsRemaining,PARA,Parametrization,parent,Parenthesize,Parser,Parsing,part,Partition,partition,partitions,parts,path,pdim,peek,PencilsOfQuadrics,Permanents,permanents,permutations,pfaffians,PHCpack,PhylogeneticTrees,pi,PieriMaps,pivots,PlaneCurveSingularities,plus,poincare,poincareN,Points,polarize,poly,Polyhedra,Polymake,PolynomialRing,Posets,Position,position,positions,PositivityToricBundles,POSIX,Postfix,Power,power,powermod,PRE,Precision,precision,Prefix,prefixDirectory,prefixPath,preimage,prepend,presentation,pretty,primaryComponent,PrimaryDecomposition,primaryDecomposition,PrimaryTag,PrimitiveElement,Print,print,printerr,printingAccuracy,printingLeadLimit,printingPrecision,printingSeparator,printingTimeLimit,printingTrailLimit,printString,printWidth,processID,Product,product,ProductOrder,profile,profileSummary,Program,programPaths,ProgramRun,Proj,Projective,ProjectiveHilbertPolynomial,projectiveHilbertPolynomial,ProjectiveVariety,promote,protect,Prune,prune,PruneComplex,pruningMap,Pseudocode,pseudocode,pseudoRemainder,Pullback,PushForward,pushForward,Python,QQ,QQParser,QRDecomposition,QthPower,Quasidegrees,QuaternaryQuartics,QuillenSuslin,quit,Quotient,quotient,quotientRemainder,QuotientRing,Radical,radical,RadicalCodim1,radicalContainment,RaiseError,random,RandomCanonicalCurves,RandomComplexes,RandomCurves,RandomCurvesOverVerySmallFiniteFields,RandomGenus14Curves,RandomIdeals,randomKRationalPoint,RandomMonomialIdeals,randomMutableMatrix,RandomObjects,RandomPlaneCurves,RandomPoints,RandomSpaceCurves,Range,rank,RationalMaps,RationalPoints,RationalPoints2,ReactionNetworks,read,readDirectory,readlink,readPackage,RealField,RealFP,realPart,realpath,RealQP,RealQP1,RealRoots,RealRR,RealXD,recursionDepth,recursionLimit,Reduce,reducedRowEchelonForm,reduceHilbert,reductionNumber,ReesAlgebra,reesAlgebra,reesAlgebraIdeal,reesIdeal,References,ReflexivePolytopesDB,regex,regexQuote,registerFinalizer,regSeqInIdeal,Regularity,regularity,relations,RelativeCanonicalResolution,relativizeFilename,Reload,remainder,RemakeAllDocumentation,remove,removeDirectory,removeFile,removeLowestDimension,reorganize,replace,RerunExamples,res,reshape,ResidualIntersections,ResLengthThree,Resolution,resolution,ResolutionsOfStanleyReisnerRings,restart,Result,resultant,Resultants,return,returnCode,Reverse,reverse,RevLex,Right,right,Ring,ring,RingElement,RingFamily,ringFromFractions,RingMap,rootPath,roots,rootURI,rotate,round,rowAdd,RowExpression,rowMult,rowPermute,rowRankProfile,rowSwap,RR,RRi,rsort,run,RunDirectory,RunExamples,RunExternalM2,runHooks,runLengthEncode,runProgram,same,saturate,Saturation,scan,scanKeys,scanLines,scanPairs,scanValues,schedule,schreyerOrder,Schubert,Schubert2,SchurComplexes,SchurFunctors,SchurRings,SCRIPT,scriptCommandLine,ScriptedFunctor,SCSCP,searchPath,sec,sech,SectionRing,SeeAlso,seeParsing,SegreClasses,select,selectInSubring,selectVariables,SelfInitializingType,SemidefiniteProgramming,Seminormalization,separate,SeparateExec,separateRegexp,Sequence,sequence,Serialization,serialNumber,Set,set,setEcho,setGroupID,setIOExclusive,setIOSynchronized,setIOUnSynchronized,setRandomSeed,setup,setupEmacs,sheaf,SheafExpression,sheafExt,sheafHom,SheafOfRings,shield,ShimoyamaYokoyama,short,show,showClassStructure,showHtml,showStructure,showTex,showUserStructure,SimpleDoc,simpleDocFrob,SimplicialComplexes,SimplicialDecomposability,SimplicialPosets,SimplifyFractions,sin,singularLocus,sinh,size,size2,SizeLimit,SkewCommutative,SlackIdeals,sleep,SLnEquivariantMatrices,SLPexpressions,SMALL,smithNormalForm,solve,someTerms,Sort,sort,sortColumns,SortStrategy,source,SourceCode,SourceRing,SPACE,SpaceCurves,SPAN,span,SparseMonomialVectorExpression,SparseResultants,SparseVectorExpression,Spec,SpechtModule,SpecialFanoFourfolds,specialFiber,specialFiberIdeal,SpectralSequences,splice,splitWWW,sqrt,SRdeformations,stack,stacksProject,Standard,standardForm,standardPairs,StartWithOneMinor,stashValue,StatePolytope,StatGraphs,status,stderr,stdio,step,StopBeforeComputation,stopIfError,StopWithMinimalGenerators,Strategy,String,STRONG,StronglyStableIdeals,STYLE,Style,style,SUB,sub,SubalgebraBases,sublists,submatrix,submatrixByDegrees,Subnodes,subquotient,SubringLimit,Subscript,subscript,SUBSECTION,subsets,substitute,substring,subtable,Sugarless,Sum,sum,SumOfTwists,SumsOfSquares,SUP,super,SuperLinearAlgebra,Superscript,superscript,support,SVD,SVDComplexes,switch,SwitchingFields,sylvesterMatrix,Symbol,symbol,SymbolBody,symbolBody,SymbolicPowers,symlinkDirectory,symlinkFile,symmetricAlgebra,symmetricAlgebraIdeal,symmetricKernel,SymmetricPolynomials,symmetricPower,synonym,SYNOPSIS,syz,Syzygies,SyzygyLimit,SyzygyMatrix,SyzygyRows,syzygyScheme,TABLE,Table,table,take,Tally,tally,tan,TangentCone,tangentCone,tangentSheaf,tanh,target,Task,taskResult,TateOnProducts,TD,temporaryFileName,tensor,tensorAssociativity,TensorComplexes,terminalParser,terms,TEST,Test,testExample,testHunekeQuestion,TestIdeals,TestInput,tests,TEX,tex,TeXmacs,texMath,Text,TH,then,Thing,ThinSincereQuivers,ThreadedGB,threadVariable,Threshold,throw,Time,time,times,timing,TITLE,TO,to,TO2,toAbsolutePath,toCC,toDividedPowers,toDual,toExternalString,toField,TOH,toList,toLower,top,top,topCoefficients,Topcom,topComponents,topLevelMode,Tor,TorAlgebra,Toric,ToricInvariants,ToricTopology,ToricVectorBundles,toRR,toRRi,toSequence,toString,TotalPairs,toUpper,TR,trace,transpose,TriangularSets,Tries,Trim,trim,Triplets,Tropical,true,Truncate,truncate,truncateOutput,Truncations,try,TSpreadIdeals,TT,tutorial,Type,TypicalValue,typicalValues,UL,ultimate,unbag,uncurry,Undo,undocumented,uniform,uninstallAllPackages,uninstallPackage,Unique,unique,Units,Unmixed,unsequence,unstack,Up,UpdateOnly,UpperTriangular,URL,urlEncode,Usage,use,UseCachedExampleOutput,UseHilbertFunction,UserMode,userSymbols,UseSyzygies,utf8,utf8check,validate,value,values,Variable,VariableBaseName,Variables,Variety,variety,vars,Vasconcelos,Vector,vector,VectorExpression,VectorFields,VectorGraphics,Verbose,Verbosity,Verify,VersalDeformations,versalEmbedding,Version,version,VerticalList,VerticalSpace,viewHelp,VirtualResolutions,VirtualTally,VisibleList,Visualize,wait,WebApp,wedgeProduct,weightRange,Weights,WeylAlgebra,WeylGroups,when,whichGm,while,width,wikipedia,Wrap,wrap,WrapperType,XML,xor,youngest,zero,ZeroExpression,zeta,ZZ,ZZParser,
makeGWClass,getDiagonalClass,makeDiagonalForm,getSignature,isAnisotropic,isIsotropic,getAnisotropicPart,getSumDecompositionString,getGlobalA1Degree,getLocalA1Degree,isIsomorphicForm,addGW,getSumDecomposition,transferGW,makeGWuClass,getGlobalUnstableA1Degree,getLocalUnstableA1Degree,addGWuDivisorial}
}
\newtheorem{theorem}{Theorem}
\newtheorem{proposition}[theorem]{Proposition}
\theoremstyle{definition}
\newtheorem{remark}[theorem]{Remark}
\DeclareMathOperator{\Bez}{B\acute{e}z}
\DeclareMathOperator{\GW}{GW}
\DeclareMathOperator{\Nwt}{Nwt}
\DeclareMathOperator{\Res}{Res}
\DeclareMathOperator{\Tr}{Tr}
\title[Transfers and Unstable $\mathbb{A}^{1}$-Brouwer Degrees in Macaulay2]{Transfers and Unstable Degrees in the $\mathbb{A}^{1}$-Brouwer Degrees Package for Macaulay2}
\author[Atherton]{Stephanie Atherton} 
\address{Stephanie Atherton\\
         Department of Mathematics and Statistics\\
         University of Massa\-chusetts\\
         Lowell, MA 01854\\ 
         USA} 
\email{stephanie\_atherton@uml.edu}
\urladdr{https://bio.site/toyTeX}
\author[Dutta]{Somak Dutta} 
\address{Somak Dutta\\
         Department of Mathematics\\
         Texas A\&M University\\
         College Station, TX 77843\\ 
         USA} 
\email{somakdutta@tamu.edu}
\urladdr{https://somak-dutta.github.io}
\author[Lopez Garcia]{Jordy Lopez Garcia} 
\address{Jordy Lopez Garcia\\ 
         Department of Applied and Computational Mathematics and Statistics\\
         University of Notre Dame\\
         Notre Dame, IN 46656\\ 
         USA} 
\email{jlopezga@nd.edu}
\urladdr{https://jordylopez27.github.io}
\author[Louwsma]{Joel Louwsma} 
\address{Joel Louwsma\\
         Department of Mathematics\\ 
         Niagara University\\ 
         Niagara University, NY 14109\\ 
         USA} 
\email{jlouwsma@niagara.edu}
\urladdr{https://www.joellouwsma.com}
\author[Luo]{Yuyuan Luo} 
\address{Yuyuan Luo\\
         Department of Mathematics\\
         Princeton University\\
         Princeton, NJ 08544\\ 
         USA} 
\email{luo.yuyuan@princeton.edu}
\urladdr{https://yuyuan-luo.github.io}
\author[Ong]{Wern Juin Gabriel Ong} 
\address{Wern Juin Gabriel Ong\\
         Department of Mathematics\\
         University of Bonn\\
         Bonn, D-53111\\
         Germany} 
\email{wgabrielong@uni-bonn.de}
\urladdr{https://wgabrielong.github.io}
\author[Sagarayaj]{Ruzho Sagarayaj} 
\address{Ruzho Sagarayaj\\
         Department of Mathematics\\
         Texas A\&M University\\
         College Station, TX 77843\\ 
         USA} 
\email{ruzhomath@tamu.edu}
\subjclass[2020]{primary 14F42, 55M25, 68W30; secondary 11E04}
\begin{document}

\begin{abstract}
We describe a significant update to the \emph{Macaulay2} package \texttt{A1BrouwerDegrees}. We extend several methods in the previous version of the package to the setting of finite \'{e}tale algebras, allowing the computation of transfers along finite \'{e}tale extensions. Additionally, we implement a number of new features for the computation of unstable $\mathbb{A}^{1}$-Brouwer degrees and manipulation of classes in the unstable Grothendieck--Witt group. 
\end{abstract}

\maketitle

\section{Introduction}\label{sec: introduction}

The $\mathbb{A}^{1}$-Brouwer degree is a quadratic-form-valued enhancement of the classical Brouwer degree in motivic homotopy theory. Instead of being valued in the integers, the $\mathbb{A}^{1}$-Brouwer degree (or $\mathbb{A}^{1}$-degree) of a morphism between affine spaces $f\colon\mathbb{A}^{n}_{k}\to\mathbb{A}^{n}_{k}$ over a field~$k$ for is valued in the Grothendieck--Witt ring $\GW(k)$ of symmetric bilinear forms. Algorithms for computing these degrees and manipulating quadratic forms are provided in \emph{Macaulay2}~\cite{M2} as part of the package \texttt{A1BrouwerDegrees} \cite{A1BrouwerDegreesSource,A1BrouwerDegreesArticle}. 

We describe an update to this package that improves on its features in several ways: 
\begin{enumerate}
    \item Computations in the Grothendieck--Witt ring can now be performed over finite \'{e}tale algebras over fields, extending the previous version of the package which was restricted to $\mathbb{C},\mathbb{R},\mathbb{Q}$, and finite fields of characteristic not equal to two. 
    \item For a finite \'{e}tale algebra~$L$ over a field~$k$, we implement a function for computing the transfer $\GW(L)\to\GW(k)$. 
    \item We implement methods for computing unstable $\mathbb{A}^{1}$-Brouwer degrees both globally and locally at $k$-rational points, valued in the unstable Grothendieck--Witt group $\GW^{u}(k)$. 
    \item We implement methods for manipulating unstable Grothendieck--Witt classes over finite \'{e}tale algebras, including methods for verifying isomorphisms of classes and producing simplified representatives.  
\end{enumerate}

\subsection{Outline}\label{subsec: outline}
In Section~\ref{sec: etale and transfers}, we discuss methods for computing with symmetric bilinear forms over a finite \'{e}tale algebra~$L$ over a field~$k$ and for computing the transfer $\GW(L)\to\GW(k)$. Section~\ref{sec: unstable group} discusses the construction of unstable Grothendieck--Witt classes in \emph{Macaulay2} and methods for manipulating them. Finally, Section~\ref{sec: unstable degrees} discusses methods for computing unstable $\mathbb{A}^{1}$-Brouwer degrees and the relationship between local and global degrees via a Poincar\'{e}--Hopf-type theorem.

\subsection{Related work}\label{subsec: related work}
The \emph{Macaulay2} package \texttt{A1BrouwerDegrees} \cite{A1BrouwerDegreesSource,A1BrouwerDegreesArticle} provides methods for computing local and global $\mathbb{A}^{1}$-Brouwer degrees for endomorphisms of affine spaces over fields and for manipulation of quadratic forms in the Grothendieck--Witt ring $\GW(k)$ of a field~$k$, building on preliminary algorithms of Brazelton--McKean--Pauli \cite{BMPCode,BMP}. 

Explicit formulae to compute unstable global $\mathbb{A}^{1}$-degrees were introduced by Cazanave~\cite{Cazanave} and further studied by Kass--Wickelgren~\cite{KWClassical}, who compute the $\GW(k)$-factor of the unstable local degree at rational points. Recent work of Igieobo--McKean--Sanchez--Taylor--Wickelgren~\cite{IMSTW} provides methods for computing local unstable $\mathbb{A}^{1}$-degrees at rational points and provides a Poincar\'{e}--Hopf-type theorem relating local and global unstable $\mathbb{A}^{1}$-degrees for rational functions with all zeroes $k$-rational. It is expected that a Poincar\'{e}--Hopf-type theorem will hold and be similarly computable in the setting where the zeroes of the rational function are not necessarily $k$-rational, as conjectured by Adhikari--Hall--McKean \cite[Conjecture~4.1]{AHM}. 

\subsection{Software availability}\label{subsec: software availibility}
The software described here is available as version 2.0 of the \texttt{A1BrouwerDegrees} package in versions 1.25.11 and later of \emph{Macaulay2}. The documentation is hosted on the \emph{Macaulay2} website at the following link:
{\fontsize{10}{\baselineskip}\selectfont
\begin{center}
\url{https://macaulay2.com/doc/Macaulay2/share/doc/Macaulay2/A1BrouwerDegrees/html/index.html}
\end{center}
\normalsize}

\section{Finite \'{e}tale algebras and Grothendieck--Witt transfers}\label{sec: etale and transfers}

Let $k$ be a field of characteristic not equal to two. Recall that a $k$-algebra~$L$ is said to be of \emph{finite type} if it admits a surjection from a polynomial algebra over~$k$ in finitely many variables \cite[\href{https://stacks.math.columbia.edu/tag/00F3}{Tag~00F3}]{stacks-project}. A finite-type $k$-algebra~$L$ is said to be \emph{finite \'{e}tale} if the trace pairing $L\times L\to k$ given by $(x,y)\mapsto\Tr_{L/k}(xy)$ is non-degenerate \cite[\href{https://stacks.math.columbia.edu/tag/0BVH}{Tag~0BVH}]{stacks-project}. Such an algebra is isomorphic to a finite product of finite separable extensions of~$k$ \cite[\href{https://stacks.math.columbia.edu/tag/00U3}{Tag~00U3}]{stacks-project}.

\subsection{Computations with finite \'{e}tale algebras}\label{subsec: fin et computations}
Given a finite \'{e}tale algebra~$L$ over a field~$k$, we can define its Grothendieck--Witt ring $\GW(L)$. In the \texttt{A1BrouwerDegreesPackage}, elements of Grothendieck--Witt rings have type \texttt{GrothendieckWittClass}. Objects of this type can be constructed from their Gram matrices, which are non-degenerate symmetric matrices over~$L$. 
\begin{lstlisting}[language=Macaulay2output]
`\underline{\tt i1}` : needsPackage "A1BrouwerDegrees"
`\underline{\tt o1}` = `$\texttt{A1BrouwerDegrees}$`
`\underline{\tt o1}` : `$\texttt{Package}$`
`\underline{\tt i2}` : L = QQ[x]/(x^2 - 1);
`\underline{\tt i3}` : M = matrix(L, {{1,2},{2,x}});
`\underline{\tt o3}` : `$\texttt{Matrix}$ $L^{2}\,\longleftarrow \,L^{2}$`
`\underline{\tt i4}` : beta = makeGWClass M
`\underline{\tt o4}` = `$\left(\!\begin{array}{cc}
1&2\\
2&x
\end{array}\!\right)$`
`\underline{\tt o4}` : `$\texttt{GrothendieckWittClass}$`
\end{lstlisting}
The underlying Gram matrix and algebra over which a Grothendieck--Witt class is defined can be retrieved by \texttt{getMatrix} and \texttt{getAlgebra}, respectively. If the class is defined over a field, the field can be extracted using \texttt{getBaseField}. The ring operations on $\GW(L)$ are implemented as \texttt{addGW} and \texttt{multiplyGW}. Since every symmetric matrix is congruent to a diagonal one, every Grothendieck--Witt class admits a diagonal representative. Such representatives can be obtained using \texttt{getDiagonalClass}. 
\begin{lstlisting}[language=Macaulay2output]
`\underline{\tt i5}` : getDiagonalClass beta
`\underline{\tt o5}` = `$\left(\!\begin{array}{cc}
1&0\\
0&x-4
\end{array}\!\right)$`
`\underline{\tt o5}` : `$\texttt{GrothendieckWittClass}$`
\end{lstlisting}

\subsection{Computing transfers}\label{subsec: transfers}
Given a symmetric bilinear form $\beta\colon L\times L\to L$, composition with the trace $\Tr_{L/k}\colon L\to k$ yields a symmetric bilinear form on~$k$. The \emph{transfer} $\Tr_{L/k}$ is the Abelian group homomorphism $\GW(L)\to\GW(k)$  defined by $\beta\mapsto\Tr_{L/k}\circ\beta$. 

We implement a method \texttt{transferGW} for computing the transfer of a Grothendieck--Witt class. This relies on our auxiliary methods \texttt{getMultiplicationMatrix} and \texttt{getTrace} for computing traces of elements of $k$-algebras more generally, which were not previously implemented in \emph{Macaulay2}. 
\begin{lstlisting}[language=Macaulay2output]
`\underline{\tt i6}` : transferGW beta
`\underline{\tt o6}` = `$\left(\!\begin{array}{cc}
2&0\\
0&-8
\end{array}\!\right)$`
`\underline{\tt o6}` : `$\texttt{GrothendieckWittClass}$`
\end{lstlisting}

\section{The unstable Grothendieck--Witt group}\label{sec: unstable group}

In this section, we let $k$ be a field of characteristic not equal to two and $L$ be a finite \'{e}tale algebra over~$k$. We define the \emph{unstable Grothendieck--Witt group} $\GW^{u}(L)$ as the fibered product $\GW(k)\times_{k^{\times}/(k^{\times})^{2}}L^{\times}$ in the catetory of Abelian groups under the determinant and quotient maps, respectively. The operation on $\GW^{u}(L)$ is defined by addition in the first factor and multiplication in the second. Explicit generators and relations for $\GW^{u}(k)$ are described in \cite[Proposition~2.3]{IMSTW}.

The primary way unstable Grothendieck--Witt classes arise is as pointed homotopy classes of endomorphisms of $\mathbb{P}^{1}_{k}$ where $k$ is a field \cite[Theorem~3.6]{Cazanave}.  We further discuss this in Section~\ref{sec: unstable degrees}.

\subsection{Constructing unstable Grothendieck--Witt classes}\label{subsec: constructing GWu classes}
Our newly introduced type \texttt{UnstableGrothendieckWittClass} encodes elements of $\GW^{u}(L)$ as a \texttt{HashTable} consisting of the data of a Gram matrix defined over~$L$ and a scalar in~$L^{\times}$. Objects of this type can be constructed via \texttt{makeGWuClass}, which takes as input either a pair consisting of a symmetric matrix and scalar defined over the same field or a symmetric matrix over a field. In the latter case, the scalar is assumed to be the determinant of the matrix. 
\begin{lstlisting}[language=Macaulay2output]
`\underline{\tt i7}` : M = matrix(L, {{1,2},{2,x}});
`\underline{\tt o7}` : `$\texttt{Matrix}$ $L^{2}\,\longleftarrow \,L^{2}$`
`\underline{\tt i8}` : makeGWuClass M
`\underline{\tt o8}` = `$\left(\left(\!\begin{array}{cc}
1&2\\
2&x
\end{array}\!\right),\,x-4\right)$`
`\underline{\tt o8}` : `$\texttt{UnstableGrothendieckWittClass}$`
`\underline{\tt i9}` : makeGWuClass(M, x - 4)
`\underline{\tt o9}` = `$\left(\left(\!\begin{array}{cc}
1&2\\
2&x
\end{array}\!\right),\,x-4\right)$`
`\underline{\tt o9}` : `$\texttt{UnstableGrothendieckWittClass}$`
\end{lstlisting}
Due to the ubiquity of diagonal and hyperbolic forms, we additionally provide constructors \texttt{makeDiagonalUnstableForm} and \texttt{makeHyperbolicUnstableForm}. 

\subsection{Decompositions and isomorphisms in \texorpdfstring{$\GW^{u}(k)$}{GW\^{}u(k)}}\label{subsec: decompositions and isomorphisms}
In this subsection, we let $k$ be one of $\mathbb{C},\mathbb{R},\mathbb{Q}$, or a finite field of characteristic not equal to two. Two fundamental problems in the study of quadratic forms are to determine when two objects of $\GW(k)$ are isomorphic and to produce simplified representatives of a given quadratic form. Algorithms for decomposing quadratic forms over~$k$ and verifying isomorphisms are discussed in \cite[\S~2.1-2]{A1BrouwerDegreesArticle} and the references therein. These methods readily generalize to the unstable Grothendieck--Witt group $\GW^{u}(k)$.

By the structure of $\GW^{u}(k)$ as a fibered product, two classes in $\GW^{u}(k)$ are isomorphic if and only if their $\GW(k)$-factors are isomorphic and their $k^{\times}$-factors are equal. Testing isomorphisms of forms is implemented as \texttt{isIsomorphicForm}. 
\begin{lstlisting}[language=Macaulay2output]
`\underline{\tt i10}` : A1 = matrix(QQ, {{1,-2,4},{-2,2,0},{4,0,-7}});
`\underline{\tt o10}` : `$\texttt{Matrix}$ ${\mathbb Q}^{3}\,\longleftarrow \,{\mathbb Q}^{3}$`
`\underline{\tt i11}` : A2 = matrix(QQ, {{1,0,0},{0,-2,0},{0,0,9}});
`\underline{\tt o11}` : `$\texttt{Matrix}$ ${\mathbb Q}^{3}\,\longleftarrow \,{\mathbb Q}^{3}$`
`\underline{\tt i12}` : alpha1 = makeGWuClass A1
`\underline{\tt o12}` = `$\left(\left(\!\begin{array}{ccc}
1&-2&4\\
-2&2&0\\
4&0&-7
\end{array}\!\right),\,-18\right)$`
`\underline{\tt o12}` : `$\texttt{UnstableGrothendieckWittClass}$`
`\underline{\tt i13}` : alpha2 = makeGWuClass A2
`\underline{\tt o13}` = `$\left(\left(\!\begin{array}{ccc}
1&0&0\\
0&-2&0\\
0&0&9
\end{array}\!\right),\,-18\right)$`
`\underline{\tt o13}` : `$\texttt{UnstableGrothendieckWittClass}$`
`\underline{\tt i14}` : isIsomorphicForm(alpha1, alpha2)
`\underline{\tt o14}` = `$\texttt{true}$`
\end{lstlisting}
A class in $\GW^{u}(k)$ can be simplified by \texttt{getSumDecomposition}, which finds a diagonal representative of its $\GW(k)$-factor.
\begin{lstlisting}[language=Macaulay2output]
`\underline{\tt i15}` : getSumDecomposition alpha1
`\underline{\tt o15}` = `$\left(\left(\!\begin{array}{ccc}
2&0&0\\
0&1&0\\
0&0&-1
\end{array}\!\right),\,-18\right)$`
`\underline{\tt o15}` : `$\texttt{UnstableGrothendieckWittClass}$`
\end{lstlisting}

\section{Unstable \texorpdfstring{$\mathbb{A}^{1}$}{A\textonesuperior}-Brouwer degrees}\label{sec: unstable degrees}

In this section, we let $k$ be one of $\mathbb{C},\mathbb{Q}$, or a finite field of characteristic not equal to two. A rational function $f/g\colon\mathbb{P}^{1}_{k}\to\mathbb{P}^{1}_{k}$ with monic numerator is said to be \emph{pointed} if $(f/g)(\infty)=\infty$. Given a pointed rational function $f/g\colon\mathbb{P}^{1}_{k}\to\mathbb{P}^{1}_{k}$, work of Cazanave \cite[Theorem~3.6]{Cazanave} and Igieobo--McKean--Sanchez--Taylor--Wickelgren \cite[Lemma~3.10]{IMSTW} shows that the global unstable $\mathbb{A}^{1}$-degree and the local unstable $\mathbb{A}^{1}$-degree at $k$-rational points can be explicitly computed in terms of a bilinear form associated to the rational function. We recall these results below.

\begin{remark}\label{rmk: over CC intro}
In general, our techniques for computing these degrees rely on symbolic computation, which in \emph{Macaulay2} can only be performed over exact fields. Over~$\mathbb{C}$, we are able to find alternative descriptions of the degrees. We describe these in Remarks \ref{rmk: global unstable over CC} and~\ref{rmk: local unstable over CC}. 
\end{remark}

\subsection{Computing global unstable \texorpdfstring{$\mathbb{A}^{1}$}{A\textonesuperior}-degrees}\label{subsec: unstable global degrees} 
Let $f/g\colon\mathbb{P}^{1}_{k}\to\mathbb{P}^{1}_{k}$ be a pointed rational function, where $f$ and~$g$ are polynomials in $k[x]$ with no common factors and $g$ not identically zero. The global unstable $\mathbb{A}^{1}$-degree is given by $(\Bez(f/g),\det\Bez(f/g))\in\GW^{u}(k)$ \cite[Theorem~3.6]{Cazanave}, where $\Bez(f/g)$ is the \emph{B\'{e}zoutian matrix} $\Bez(f/g)=(a_{ij})_{i,j=0}^{m}$ and the $a_{ij}\in k$ are such that 
\[\frac{f(X)g(Y)-f(Y)g(X)}{X-Y}=\sum_{i,j}a_{i,j}X^{i}Y^{j}\in k[X,Y].\]

\begin{remark}\label{rmk: global unstable over CC}
\emph{Macaulay2} does not allow for the construction of rational functions over inexact fields. However, the discussion in \cite[\S~3.2]{Cazanave} shows that determinant of the B\'{e}zoutian matrix can be computed, up to sign, by the resultant of $f$ and~$g$. More specifically, $\det\Bez(f/g)=(-1)^{\frac{n(n-1)}{2}}\cdot\Res(f,g)$, where $n = \deg(f)$. This allows computation of the $k^{\times}$-factor of a global unstable $\mathbb{A}^{1}$-degree over inexact fields. In the case of $\mathbb{C}$, the $\text{GW}(k)$-factor is determined by the degree of a rational function over~$\mathbb{C}$, which together with the $k^{\times}$-factor completely determines a class in $\text{GW}^{u}(\mathbb{C})$. 
\end{remark}

This is implemented as the \texttt{getGlobalUnstableA1Degree} command.
\begin{lstlisting}[language=Macaulay2output]
`\underline{\tt i16}` : frac QQ[x];
`\underline{\tt i17}` : f = x^5 - 6*x^4 + 11*x^3 - 2*x^2 - 12*x + 8;
`\underline{\tt i18}` : g = x^4 - 5*x^2 + 7*x + 1;
`\underline{\tt i19}` : q = f/g;
`\underline{\tt i20}` : GDq = getGlobalUnstableA1Degree q
`\underline{\tt o20}` = `$\left(\left(\!\begin{array}{ccccc}
-68&38&11&-14&1\\
38&-63&63&-29&7\\
11&63&-84&39&-5\\
-14&-29&39&-16&0\\
1&7&-5&0&1
\end{array}\!\right),\,-53\,240\right)$`
`\underline{\tt o20}` : `$\texttt{UnstableGrothendieckWittClass}$`
\end{lstlisting}
An alternate version of this command takes in the numerator and denominator of a rational function separately to accommodate the case of rational functions over~$\mathbb{C}$. When called on rational functions over $\mathbb{Q}$ or a finite field, the computation is run on the underlying quotient. 
\begin{lstlisting}[language=Macaulay2output]
`\underline{\tt i21}` : CC[x];
`\underline{\tt i22}` : fc = x^5 - 6*x^4 + 11*x^3 - 2*x^2 - 12*x + 8;
`\underline{\tt i23}` : gc = x^4 - 5*x^2 + 7*x + 1;
`\underline{\tt i24}` : getGlobalUnstableA1Degree(fc, gc)
`\underline{\tt o24}` = `$\left(\left(\!\begin{array}{ccccc}
1&0&0&0&0\\
0&1&0&0&0\\
0&0&1&0&0\\
0&0&0&1&0\\
0&0&0&0&1
\end{array}\!\right),\,-{53\,240}\right)$`
`\underline{\tt o24}` : `$\texttt{UnstableGrothendieckWittClass}$`
\end{lstlisting}

\subsection{Computing local unstable \texorpdfstring{$\mathbb{A}^{1}$}{A\textonesuperior}-degrees}\label{subsec: unstable local degrees}
For a rational function $f/g\colon\mathbb{P}^{1}_{k}\to\mathbb{P}^{1}_{k}$ with monic numerator that vanishes at $r\in k$ with order~$m$, the local unstable $\mathbb{A}^{1}$-degree is given by the pair $(\Nwt_{r}(f/g),\det\Nwt_{r}(f/g))$, where $\Nwt_{r}(f/g)$ is the \textit{local Newton matrix} at~$r$. This matrix may be expressed as the antidiagonal matrix
\[\Nwt_{r}(f/g)=\begin{bmatrix}
    0 & \dots & 0 & a_{m} \\
    \vdots & \iddots & \iddots & 0 \\
    0 & \iddots & \iddots & \vdots \\
    a_{m} & 0 & \dots & 0
\end{bmatrix},\]
where $a_{m} \in k$ is the coefficient appearing in the Laurent series expansion 
\begin{equation}\label{eqn: Laurent expansion}
    \frac{g(x)}{f(x)}=\frac{a_{m}}{(x-r)^{m}}+\frac{a_{m-1}}{(x-r)^{m-1}}+\dots+\frac{a_{1}}{(x-r)}+\text{higher order terms}
\end{equation}
of $g/f$ at~$r$ \cite[Lemma~3.10]{IMSTW}. 

The method for finding the values $a_{m}\in k$ of the Laurent series expansion is not entirely immediate, and we record the following result -- likely well-known to experts -- for their computation. 

\begin{proposition}\label{prop: computation of higher residues}
Let $f/g$ be a pointed rational function. Let $r\in k$ be a root of~$f$ of multiplicity~$m$. Write a partial fraction decomposition of $g/f$ as in \eqref{eqn: Laurent expansion}. Then $a_{m}$ is given by the evaluation of $\frac{(x-r)^{m}g(x)}{f(x)}$ at~$r$.
\end{proposition}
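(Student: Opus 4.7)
The plan is to work with the factorization of $f$ at the root $r$ and exploit uniqueness of Laurent expansions in the formal completion at $r$. Since $r$ is a root of $f$ with multiplicity $m$, I would first write $f(x)=(x-r)^{m}h(x)$ with $h(x)\in k[x]$ and $h(r)\neq 0$. In particular $h$ is a unit in the local ring $k[x]_{(x-r)}$ (equivalently, in the formal power series ring $k[[x-r]]$), so the quotient $g(x)/h(x)$ is regular at $r$.

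Next, I would justify the Laurent expansion in \eqref{eqn: Laurent expansion} by working formally in $k((x-r))$. Since $h(r)\neq 0$, expanding $g/h$ as a Taylor series around $r$ gives
\[
\frac{g(x)}{h(x)}=c_{0}+c_{1}(x-r)+c_{2}(x-r)^{2}+\cdots\in k[[x-r]],
\]
and dividing by $(x-r)^{m}$ yields
\[
\frac{g(x)}{f(x)}=\frac{c_{0}}{(x-r)^{m}}+\frac{c_{1}}{(x-r)^{m-1}}+\cdots+\frac{c_{m-1}}{(x-r)}+c_{m}+c_{m+1}(x-r)+\cdots.
\]
Comparing with \eqref{eqn: Laurent expansion} and using uniqueness of Laurent coefficients in $k((x-r))$, we identify $a_{j}=c_{m-j}$ for $1\leq j\leq m$; in particular $a_{m}=c_{0}$.

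To conclude, I would observe that in $k(x)$ the rational function $\frac{(x-r)^{m}g(x)}{f(x)}$ simplifies to $g(x)/h(x)$, which is regular at $r$ since $h(r)\neq 0$. Evaluating at $r$ therefore makes sense and gives $g(r)/h(r)$, which is precisely the constant term $c_{0}$ of the Taylor expansion of $g/h$ above. By the previous paragraph this equals $a_{m}$, which is the desired identity.

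The only real subtlety is conceptual rather than computational: one has to be careful that \eqref{eqn: Laurent expansion} is interpreted as a formal Laurent expansion in $k((x-r))$ rather than as an analytic object, so that the coefficients $a_{j}$ are uniquely defined and comparison of coefficients is legitimate. Once this is pinned down, the argument is essentially the algebraic analogue of the classical formula for the leading coefficient of a pole in complex analysis.
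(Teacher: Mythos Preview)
Your proof is correct and follows essentially the same strategy as the paper: both multiply $g/f$ by $(x-r)^{m}$ to obtain a function regular at $r$, expand it as a power series centered at $r$, divide back by $(x-r)^{m}$, and compare coefficients. The only difference is in how the Taylor expansion is justified---the paper invokes the Hasse-derivative form of Taylor's theorem from \cite{Goldschmidt}, whereas you work directly in the formal completion $k[[x-r]]$, which is slightly more elementary and self-contained.
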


\begin{proof}
Writing $F(x)=\frac{(x-r)^{m}g(x)}{f(x)}$, we have that $r$ is neither a root nor a pole of $F(x)$ since $f/g$ is pointed. That is, $F$ is an analytic function in the sense that it is an element of the stalk $\mathcal{O}_{\mathbb{P}^{1}_{k},r}$ at~$r$. Since $r$ is a $k$-rational point, it in particular has separable residue field, and we can use the Hasse-derivative analogue of Taylor's theorem \cite[Corollary~2.5.14]{Goldschmidt}, where the power series expansion of~$F$ is given by
\[F(x)=F(r)+\sum_{a\geq 1}D^{(a)}_{x}(F)(r)\cdot (x-r)^{a}.\]
Dividing by $(x-r)^{m}$ once more and comparing coefficients yields the claim. 
\end{proof}

The procedure in Proposition~\ref{prop: computation of higher residues} is implemented as \texttt{getLocalUnstableA1Degree}. Taking \texttt{q} to be as in line \texttt{i19}, we have:
\begin{lstlisting}[language=Macaulay2output]
`\underline{\tt i25}` : deg1 = getLocalUnstableA1Degree(q, -1)
`\underline{\tt o25}` = `$\left(\left(\!\begin{array}{c}
-\frac{5}{27}
\end{array}\!\right),\,-\frac{5}{27}\right)$`
`\underline{\tt o25}` : `$\texttt{UnstableGrothendieckWittClass}$`
\end{lstlisting}

\begin{remark}\label{rmk: local unstable over CC}
Over $\mathbb{C}$, an adapted version of Proposition~\ref{prop: computation of higher residues} can be implemented using numerical algebraic geometry to compute the unstable local $\mathbb{A}^{1}$-degree.
\end{remark}

As in the case of computing global unstable $\mathbb{A}^{1}$-degrees, there is also a variant that takes in the numerator and denominator separately. 
\begin{lstlisting}[language=Macaulay2output]
`\underline{\tt i26}` : getLocalUnstableA1Degree(f, g, -1)
`\underline{\tt o26}` = `$\left(\left(\!\begin{array}{c}
-\frac{5}{27}
\end{array}\!\right),\,-\frac{5}{27}\right)$`
`\underline{\tt o26}` : `$\texttt{UnstableGrothendieckWittClass}$`
\end{lstlisting}

\subsection{A Poincar\'{e}--Hopf theorem at rational points}\label{subec: PH rational points}
The global degree of Section~\ref{subsec: unstable global degrees} and the local degree of Section~\ref{subsec: unstable local degrees} are related by a Poincar\'{e}--Hopf type formula. Unlike in the stable setting, where the global $\mathbb{A}^{1}$-degree is computed as a sum of local $\mathbb{A}^{1}$-degrees in the Grothendieck--Witt ring $\GW(k)$, the local-to-global principle for unstable $\mathbb{A}^{1}$-degrees depends on the configuration of zeroes of $f/g$, that is, on a (Weil) divisor on $\mathbb{P}^{1}_{k}$. More precisely, for a rational function $f/g$ all of whose zeroes $r_{1},\dots,r_{n}$ are $k$-rational with multiplicities~$m_{i}$ and local unstable degrees $(\beta_{i},d_{i})$, the global unstable degree is shown in \cite[Theorem~1.1]{IMSTW} to be given by the divisorial sum formula
\[\deg^{u}(f/g) = \left(\bigoplus_{i=1}^{n}\beta_{i},\prod_{i=1}^{n}d_{i}\cdot\prod_{i<j}(r_{i}-r_{j})^{2m_{i}m_{j}}\right).\]
Continuing with the example introduced in Sections \ref{subsec: unstable global degrees} and~\ref{subsec: unstable local degrees}, we can compute the local unstable $\mathbb{A}^{1}$-degree of~\texttt{q} at its remaining roots $1,2\in\mathbb{Q}$.
\begin{lstlisting}[language=Macaulay2output]
`\underline{\tt i27}` : deg2 = getLocalUnstableA1Degree(q, 1)
`\underline{\tt o27}` = `$\left(\left(\!\begin{array}{c}
-2
\end{array}\!\right),\,-2\right)$`
`\underline{\tt o27}` : `$\texttt{UnstableGrothendieckWittClass}$`
`\underline{\tt i28}` : deg3 = getLocalUnstableA1Degree(q, 2)
`\underline{\tt o28}` = `$\left(\left(\!\begin{array}{ccc}
0&0&\frac{11}{3}\\
0&\frac{11}{3}&0\\
\frac{11}{3}&0&0
\end{array}\!\right),\,-\frac{1\,331}{27}\right)$`
`\underline{\tt o28}` : `$\texttt{UnstableGrothendieckWittClass}$`
\end{lstlisting}
Computing the divisorial sum of the local degrees \texttt{deg1}, \texttt{deg2}, and \texttt{deg3}, this agrees with the global unstable $\mathbb{A}^{1}$-degree computed in Section~\ref{subsec: unstable global degrees}. 
\begin{lstlisting}[language=Macaulay2output]
`\underline{\tt i29}` : degSum = addGWuDivisorial({deg1,deg2,deg3}, {-1,1,2});
`\underline{\tt i30}` : isIsomorphicForm(degSum, GDq)
`\underline{\tt o30}` = `$\texttt{true}$`
\end{lstlisting}

\section{Acknowledgments}\label{sec: acknowledgments}

We warmly thank the organizers of the Madison 2025 \emph{Macaulay2} workshop, supported by NSF DMS-2508868, for providing the opportunity to collaborate on this package update. Sabrina Pauli's vision laid the foundation for this update, and we are especially indebted to her both for this and for several insightful discussions. The third-named author additionally thanks Jonathan Hauenstein for valuable conversations, and the sixth-named author additionally thanks Thomas Brazelton for valuable conversations. 

\bibliographystyle{abbrv}
\bibliography{refs}

@Misc{M2,
          author = {Grayson, Daniel R. and Stillman, Michael E.},
          title = {Macaulay2, a software system for research in algebraic geometry},
          howpublished = {Available at \url{https://macaulay2.com}}
        }

@article{A1BrouwerDegreesArticle,
AUTHOR = {Borisov, Nikita and Brazelton, Thomas and Espino, Frenly and
              Hagedorn, Thomas and Han, Zhaobo and Lopez Garcia, Jordy and
              Louwsma, Joel and Ong, Wern Juin Gabriel and Tawfeek, Andrew
              R.},
     TITLE = {{$\mathbb{A}^1$}-{B}rouwer degrees in {M}acaulay2},
   JOURNAL = {J. Softw. Algebra Geom.},
  FJOURNAL = {Journal of Software for Algebra and Geometry},
    VOLUME = {14},
      YEAR = {2024},
    NUMBER = {1},
     PAGES = {175--187},
   MRCLASS = {14F42 (11E81 14-04 14N10 55M25 68W30)},
  MRNUMBER = {4810104},
       DOI = {10.2140/jsag.2024.14.175},
       URL = {https://doi.org/10.2140/jsag.2024.14.175},
}

@article{IMSTW,
    AUTHOR = {Igieobo, John and McKean, Stephen and Sanchez, Steven and
              Taylor, Dae'Shawn and Wickelgren, Kirsten},
     TITLE = {Motivic configurations on the line},
   JOURNAL = {Adv. Math.},
  FJOURNAL = {Advances in Mathematics},
    VOLUME = {482},
      YEAR = {2025},
    NUMBER = {part C},
     PAGES = {Paper No. 110637},
      ISSN = {0001-8708},
   MRCLASS = {14F42 (55P35)},
  MRNUMBER = {4979264},
       DOI = {10.1016/j.aim.2025.110637},
       URL = {https://doi.org/10.1016/j.aim.2025.110637},
}

@article{Cazanave,
 author = {Cazanave, Christophe},
 title = {Algebraic homotopy classes of rational functions},
 fjournal = {Annales Scientifiques de l'{\'E}cole Normale Sup{\'e}rieure. Quatri{\`e}me S{\'e}rie},
 journal = {Ann. Sci. {\'E}c. Norm. Sup{\'e}r. (4)},
 issn = {0012-9593},
 volume = {45},
 number = {4},
 pages = {511--534},
 year = {2012},
 language = {English},
 keywords = {14F42},
 zbMATH = {6155580},
 Zbl = {1419.14025}
}

@article{BMP,
 author = {Brazelton, Thomas and McKean, Stephen and Pauli, Sabrina},
 title = {B{\'e}zoutians and the {{\(\mathbb{A}^1\)}}-degree},
 fjournal = {Algebra \& Number Theory},
 journal = {Algebra Number Theory},
 issn = {1937-0652},
 volume = {17},
 number = {11},
 pages = {1985--2012},
 year = {2023},
 language = {English},
 doi = {10.2140/ant.2023.17.1985},
 keywords = {14F42,55M25},
 zbMATH = {7783171},
 Zbl = {1537.14033}
}

@misc{BMPCode,
  author = {Brazelton, Thomas and McKean, Stephen and Pauli, Sabrina},
  title = {a1-degree.sage},
  howpublished = {\url{https://github.com/shmckean/A1-degree/}},
  date = {2021},
}

@misc{stacks-project,
  author       = {{The Stacks project authors}},
  title        = {The Stacks project},
  howpublished = {\url{https://stacks.math.columbia.edu}},
  year         = {2025},
}

@misc{A1BrouwerDegreesSource,
  title = {{A1BrouwerDegrees: A1-Brouwer degree computations. Version~1.1}},
  author = {Borisov, Nikita and Brazelton, Thomas and Espino, Frenly and Hagedorn, Thomas and Han, Zhaobo and Lopez Garcia, Jordy and Louwsma, Joel and Ong, Wern Juin Gabriel and Tawfeek, Andrew R.},
  howpublished = {A \emph{Macaulay2} package available at
    \url{https://github.com/Macaulay2/M2/tree/release-1.25.06-branch#}}
}

@article{KWClassical,
 author = {Kass, Jesse Leo and Wickelgren, Kirsten},
 title = {A classical proof that the algebraic homotopy class of a rational function is the residue pairing},
 fjournal = {Linear Algebra and its Applications},
 journal = {Linear Algebra Appl.},
 issn = {0024-3795},
 volume = {595},
 pages = {157--181},
 year = {2020},
 language = {English},
 doi = {10.1016/j.laa.2019.12.041},
 keywords = {14F42,14B05,55M25},
 zbMATH = {7181084},
 Zbl = {1437.14030}
}

@book{Goldschmidt,
 author = {Goldschmidt, David M.},
 title = {Algebraic functions and projective curves},
 fseries = {Graduate Texts in Mathematics},
 series = {Grad. Texts Math.},
 issn = {0072-5285},
 volume = {215},
 isbn = {0-387-95432-5},
 year = {2003},
 publisher = {New York, NY: Springer},
 language = {English},
 doi = {10.1007/b97844},
 keywords = {14H05,11R42,11R58,14G10,14G50},
 zbMATH = {1836560},
 Zbl = {1034.14011}
}

@Misc{AHM,
 Author = {Adhikari, Swechchha and Hall, Brent and McKean, Stephen},
 Title = {Tolerants},
 Year = {2025},
 HowPublished = {Preprint, {arXiv}:2506.22897 [math.{AG}]},
 URL = {https://arxiv.org/abs/2506.22897},
 arXiv = {arXiv:2506.22897}
}

\end{document}